\newcommand{\Cc}{\mathbb{C}}
\newcommand{\Qq}{\mathbb{Q}}
\theoremstyle{plain}
\newtheorem{theorem}{Theorem}[section]    
\newtheorem{twisting lemma}[theorem]{Twisting lemma}
\newtheorem{lemma}[theorem]{Lemma}       
\newtheorem{proposition}[theorem]{Proposition}  
\theoremstyle{remark}
\def\cm{\hbox{\hbox{\rm C}\kern-5pt{\raise 1pt\hbox{$|$}}}}
\def\lhfl#1#2{\smash{\mathop{\hbox to 12mm{\leftarrowfill}}
\limits^{#1}_{#2}}}
\def\rhfl#1#2{\smash{\mathop{\hbox to 12mm{\rightarrowfill}}
\limits^{#1}_{#2}}}
\def\build#1_#2^#3{\mathrel{
\mathop{\kern 0pt#1}\limits_{#2}^{#3}}}
\def\htrait#1#2{\smash{\mathop{\hbox to 12mm{\hrulefill}}
\limits^{#1}_{#2}}}
\def\sxbullet{{\raise 2pt\hbox{\bf .}}}
\numberwithin{equation}{section}
\begin{document}

\title{Automorphism groups over Hilbertian fields}

\author{Fran\c cois Legrand}

\email{legrandfranc@technion.ac.il}

\address{Department of Mathematics, Technion - Israel Institute of Technology, Haifa 32000, Israel}

\author{Elad Paran}

\email{paran@openu.ac.il}

\address{Department of Mathematics and Computer Science, the Open University of Israel, Ra'anana 4353701, Israel}

\date{\today}

\maketitle

\vspace{-0.5mm}

\begin{abstract}
We show that every finite group occurs as the automorphism group of infinitely many finite (field) extensions of any given Hilbertian field. This extends and unifies previous results of M. Fried and Takahashi on the global field case.
\end{abstract}

\vspace{-0.5mm}

\section{Introduction}

In the present paper, we are interested in the following rudimentary question in field theory: {\it{given an arbitrary field $k$, is every finite group (isomorphic to) the automorphism group of a finite (field) extension $L/k$?}}\footnote{This question is weaker than the so-called {\it{Inverse Galois Problem}} as we do not require the extension $L/k$ to be Galois.} First negative conclusions on this natural problem can be derived from any classical book in Galois theory. For example, if $k=\Cc$, then, the answer is obviously negative as $\Cc$ has only one finite extension (namely, the trivial one $\Cc/\Cc$). Other similar examples are the field of real numbers, finite fields or the field $\kappa((T))$ of Laurent series with coefficients in any algebraically closed field $\kappa$ of characteristic zero.

In \cite{FK78}, E. Fried and J. Koll\'ar gave a positive answer in the case $k=\Qq$. However, M. Fried found a gap in their proof and gave a complete and different argument in \cite{Fri80}, even proving that there exist infinitely many finite extensions of $\Qq$ with a specified automorphism group by using {\it{Hilbert's irreducibility theorem}} \cite{Hil92}. More generally, the proof given in \cite{Fri80} works {\it{verbatim}} for every {\it{Hilbertian}}\footnote{Recall that a field $k$ is {\it{Hilbertian}} if Hilbert's irreducibility theorem holds for $k$ instead of $\Qq$. See, e.g., \cite{FJ08} for more on Hilbertian fields.} field of characteristic zero. Independently, Takahashi \cite{Tak80} proved that the answer is positive for all global fields, providing in particular the first examples in positive characteristic. Finally, in \cite{Gey83}, Geyer offered an elementary proof of M. Fried's result (for $\Qq$) which does not use Hilbert's irreducibility theorem.

A classical conjecture in field arithmetic asserts that every finite group occurs as the automorphism group of some finite {\it{Galois}} extension of any given Hilbertian field; see, e.g., \cite[\S2.1.1]{DD97b}. It is then natural to ask whether the previous results of M. Fried and Takahashi hold for arbitrary Hilbertian fields. We show that the answer is positive, thus extending and unifying all the previous particular cases mentioned above:

\begin{theorem} \label{thm}
Every finite group $G$ occurs as the automorphism group of infinitely many distinct finite extensions of any given Hilbertian field $k$.
\end{theorem}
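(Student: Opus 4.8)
The plan is to reduce the theorem to a purely group-theoretic statement and then realize it arithmetically. Recall that for a finite separable extension $L/k$ with Galois closure $\widehat{L}/k$, setting $\Gamma = \Gal(\widehat{L}/k)$ and $H = \Gal(\widehat{L}/L)$ (so that $L = \widehat{L}^{H}$ and $H$ is core-free, i.e. contains no nontrivial normal subgroup of $\Gamma$), the Galois correspondence gives $\Aut(L/k) \cong N_{\Gamma}(H)/H$. Conversely, any $\Gamma$ realized as a Galois group over $k$ together with a core-free $H \le \Gamma$ produces an extension with automorphism group $N_{\Gamma}(H)/H$. Thus it suffices to find, for each finite group $G$, a finite group $\Gamma$ realizable as a Galois group over \emph{every} Hilbertian field, together with a core-free $H \le \Gamma$ with $N_{\Gamma}(H)/H \cong G$. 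Since the generic polynomial of degree $n$ yields a $k$-regular Galois extension of $k(T)$ with group $S_n$ over any field $k$, the natural target is $\Gamma = S_n$: symmetric groups are realizable over every Hilbertian field, regardless of characteristic.

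The crux is therefore the group theory: producing a core-free $H \le S_n$ with $N_{S_n}(H)/H \cong G$. Two clean cases guide the construction. If $G$ is complete (trivial center, no outer automorphisms), take $n = |G|$ and let $H = \rho(G)$ be the right-regular copy; then $N_{S_n}(\rho(G)) = \mathrm{Hol}(G) = \rho(G) \rtimes \Aut(G)$, so $N_{S_n}(H)/H \cong \Aut(G) = G$, with $\rho(G)$ core-free. If $G = \Zz/\ell\Zz$ is cyclic, choose a prime $p \equiv 1 \pmod{\ell}$, put $\Gamma = S_p$ and $H = C_p \rtimes C_{(p-1)/\ell} \le \mathrm{AGL}_1(\Ff_p) = C_p \rtimes C_{p-1}$; since the regular subgroup $C_p$ is the unique Sylow $p$-subgroup of $H$ (hence characteristic) and $N_{S_p}(C_p) = \mathrm{AGL}_1(\Ff_p)$, one gets $N_{S_p}(H) = \mathrm{AGL}_1(\Ff_p)$ and $N_{S_p}(H)/H \cong C_{\ell}$. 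For general $G$ I would seek $U \le \mathrm{GL}_d(\Ff_p)$ with $p \nmid |U|$ and $N_{\mathrm{GL}_d(\Ff_p)}(U)/U \cong G$, let $T = \Ff_p^{d}$ act regularly (so $N_{S_{p^d}}(T) = \mathrm{AGL}_d(\Ff_p) = T \rtimes \mathrm{GL}_d(\Ff_p)$), and take $H = T \rtimes U$; because $T$ is then the unique Sylow $p$-subgroup of $H$, the normalizer is forced into $\mathrm{AGL}_d(\Ff_p)$ and $N_{S_{p^d}}(H)/H$ collapses to $N_{\mathrm{GL}_d(\Ff_p)}(U)/U \cong G$. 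Producing such a $U$ uniformly in $G$ — equivalently, a clean construction of a core-free $H$ with $N_{S_n}(H)/H \cong G$ for every finite $G$ — is the main obstacle.

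Granting the group theory, the arithmetic proceeds as follows. Realize $S_n$ as the Galois group of a $k$-regular extension $\widehat{F}/k(T)$ via the generic degree-$n$ polynomial, and set $F = \widehat{F}^{H}$. By Hilbert's irreducibility theorem, the set of $t_0 \in k$ for which the specialization $\widehat{F}_{t_0}/k$ is Galois with group $\cong S_n$ is infinite; for every such $t_0$ the Galois correspondence gives $\Aut(F_{t_0}/k) \cong N_{S_n}(H)/H \cong G$, while core-freeness of $H$ guarantees that $\widehat{F}_{t_0}$ is the Galois closure of $F_{t_0}/k$. Separability of the generic polynomial ensures this argument works in positive characteristic as well, covering the global function field case of Takahashi.

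Finally, to upgrade existence to infinitely many distinct extensions, I would invoke the twisting lemma together with Hilbertianity: twisting $\widehat{F}/k(T)$ by suitable cocycles and tracking the rational points on the resulting twisted covers should let one separate the specializations into infinitely many pairwise non-isomorphic fields $F_{t_0}$, each still carrying automorphism group exactly $G$. The two places where genuine work concentrates are thus the uniform group-theoretic realization of $G$ as a normalizer quotient of a universally realizable group, and the control, via twisting and Hilbert's irreducibility theorem, guaranteeing both that the automorphism group does not grow under specialization and that infinitely many non-isomorphic extensions occur.
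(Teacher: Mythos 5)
There is a genuine gap, and you name it yourself: the entire argument rests on producing, for an \emph{arbitrary} finite group $G$, a core-free subgroup $H$ of some universally realizable group (you propose $S_n$) with $N_{S_n}(H)/H\cong G$, and this step is not carried out. The two cases you do settle (complete groups via the holomorph, cyclic groups via $\mathrm{AGL}_1(\Ff_p)$) are correct but very special, and your proposed general reduction is circular in difficulty: finding $U\le \mathrm{GL}_d(\Ff_p)$ with $p\nmid |U|$ and $N_{\mathrm{GL}_d(\Ff_p)}(U)/U\cong G$ is again the problem of realizing $G$ as a normalizer quotient inside a prescribed ambient group, which is exactly what you set out to do. Until that is proved uniformly in $G$, the proof does not exist; everything downstream (specialization, distinctness) is routine by comparison, and indeed your specialization step coincides with the paper's Proposition \ref{prop 1} (the twisting lemma is not needed for distinctness --- the geometric condition $[E\overline{k}:\overline{k}(T)]\ge 2$ suffices).

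It is worth seeing how the paper sidesteps precisely this obstacle. Rather than hunting for a subgroup of $S_n$ with prescribed normalizer quotient, it realizes $S_n=\Gal(L/k)$ over the Hilbertian field $k$, takes the fixed field $L^{G'}=k(y)$ of a copy $G'$ of $G$, and then \emph{rigidifies} by adjoining a root $x(T)$ of the cubic $P_y(T,X)=X^3+(T-y)X+(T-y)$, setting $E=L(T,x(T))$. Proposition \ref{lemma 4} shows this cubic has group $S_3$ over $L(T)$ (so the degree-$3$ subextension admits no nontrivial automorphisms) and, crucially, that distinct values $y_1\ne y_2$ yield distinct root fields $K_{y_1}\ne K_{y_2}$; hence any $\sigma\in{\rm Aut}(E/k(T))$ must fix $y$, forcing ${\rm Aut}(E/k(T))={\rm Aut}(E/L^{G'}(T))\cong G$ (Lemmas \ref{lemma 1} and \ref{lemma 2}). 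The group-theoretic problem you identify as the main obstacle is thus dissolved entirely by a field-theoretic construction; if you want to salvage your route, you would need either a proof of the normalizer-quotient statement for $S_n$ (or for some other family of groups realizable over every Hilbertian field), or a rigidification device of the kind the paper supplies.
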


{\bf{Acknowledgments.}} We wish to thank Dan Haran for helpful comments and the anonymous referee for suggesting a more elementary proof of Proposition \ref{lemma 4}. The first author is partially supported by the Israel Science Foundation (grants No. 693/13 and 577/15). The second author is partially supported by the Israel Science Foundation (grant No. 696/13).

\vspace{3mm}

\noindent
{\it{Notation.}} Throughout the paper, let $k$ be a field, $T$ an indeterminate over $k$, $\Omega$ an algebraic closure of $k(T)$, $\overline{k}$ the algebraic closure of $k$ inside $\Omega$, and $G$ a finite group.

\section{Preliminaries}

\subsection{Field theoretic background}

Given a finite (field) extension $L/k$ (with $L \subseteq \overline{k}$), recall that the {\it{automorphism group of $L/k$}}, denoted by ${\rm{Aut}} (L/k)$, is the group of all (field) automorphisms of $L$ which fix every element of $k$. This group is finite and one has $|{\rm{Aut}} (L/k)| \leq [L:k]$. The extension $L/k$ is {\it{Galois}} (that is, normal and separable) if and only if $|{\rm{Aut}} (L/k)| = [L:k]$. In this case, the automorphism group of $L/k$ is the {\it{Galois group of $L/k$}} and is denoted by ${\rm{Gal}}(L/k)$.

Recall the following classical lemma:

\begin{lemma} \label{lemma 5}
Assume that the extension $L/k$ is Galois. Given an intermediate field $k \subseteq F \subseteq L$, denote the normalizer of ${\rm{Gal}}(L/F)$ in ${\rm{Gal}}(L/k)$ by $N$. Then, the groups ${\rm{Aut}}(F/k)$ and $N/{\rm{Gal}}(L/F)$ are isomorphic.
\end{lemma}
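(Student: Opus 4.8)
The plan is to construct an explicit surjective group homomorphism from $N$ onto $\mathrm{Aut}(F/k)$ whose kernel is exactly $\mathrm{Gal}(L/F)$, and then to invoke the first isomorphism theorem. To fix notation, I would write $G = \mathrm{Gal}(L/k)$ and $H = \mathrm{Gal}(L/F)$, so that, by the fundamental theorem of Galois theory applied to the Galois extension $L/k$, we have $H \leq G$ and $F = L^{H}$, the fixed field of $H$. Since every subgroup is contained in its own normalizer, $H$ is a (normal) subgroup of $N$, so the quotient $N/H$ is defined; this is the group we aim to identify with $\mathrm{Aut}(F/k)$.

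The key preliminary observation I would establish is that, for $g \in G$, one has $g(F) = F$ if and only if $g \in N$. Indeed $g(F) = g(L^{H}) = L^{gHg^{-1}}$, and by the bijectivity of the Galois correspondence $L^{gHg^{-1}} = L^{H}$ holds precisely when $gHg^{-1} = H$, that is, when $g$ normalizes $H$. Consequently, for each $g \in N$ the restriction $g|_{F}$ is a well-defined $k$-automorphism of $F$, and the assignment $g \mapsto g|_{F}$ yields a group homomorphism $\rho \colon N \to \mathrm{Aut}(F/k)$.

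It then remains to compute the kernel and to verify surjectivity. An element $g \in N$ lies in $\ker \rho$ exactly when it fixes $F$ pointwise, i.e.\ when $g \in \mathrm{Gal}(L/F) = H$; as $H \subseteq N$, this gives $\ker \rho = H = \mathrm{Gal}(L/F)$. For surjectivity, I would take any $\varphi \in \mathrm{Aut}(F/k)$ and regard it as a $k$-embedding of $F$ into $\overline{k}$. By the isomorphism extension theorem it extends to a $k$-embedding $\widetilde\varphi \colon L \to \overline{k}$; since $L/k$ is normal, $\widetilde\varphi(L) = L$, so $\widetilde\varphi$ belongs to $\mathrm{Aut}(L/k) = \mathrm{Gal}(L/k) = G$ (using that $L/k$ is Galois). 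Because $\widetilde\varphi(F) = \varphi(F) = F$, the preliminary observation forces $\widetilde\varphi \in N$, and by construction $\rho(\widetilde\varphi) = \widetilde\varphi|_{F} = \varphi$. The first isomorphism theorem then gives $\mathrm{Aut}(F/k) \cong N/\mathrm{Gal}(L/F)$, as desired.

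The step I expect to be the main obstacle is surjectivity, as it is the only place where the normality of $L/k$ is genuinely needed: one must lift an automorphism of the possibly non-normal intermediate extension $F/k$ to an automorphism of all of $L$, and normality of $L/k$ is precisely what ensures that the extended embedding maps $L$ onto itself rather than merely into $\overline{k}$. The remaining points — that $\rho$ is a homomorphism and the kernel computation — are routine.
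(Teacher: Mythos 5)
Your proof is correct and complete: the paper states this lemma as classical and gives no proof, and your argument (restriction homomorphism $\rho\colon N \to \mathrm{Aut}(F/k)$, kernel computation, and surjectivity via the extension theorem together with normality of $L/k$) is exactly the standard one the authors are implicitly invoking. You also correctly identify surjectivity as the only step where normality of $L/k$ is genuinely used, so there is nothing to add.
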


\subsection{Function field extensions}

Let $E/k(T)$ be a finite separable extension with $E \subseteq \Omega$, $E \overline{k}$ the compositum of $E$ and $\overline{k}(T)$ inside $\Omega$, and $\widehat{E}/k(T)$ the Galois closure of $E/k(T)$ inside $\Omega$. Let $P(T,X) \in k[T][X]$ be the minimal polynomial over $k(T)$ of a primitive element $x(T)$ of $\widehat{E}/k(T)$, assumed to be integral over $k[T]$. Moreover,  let $a(T,X) \in k[T][X]$ be such that $a(T,x(T))$ is a primitive element of $E/k(T)$. Denote the finitely many elements $t \in \overline{k}$ such that $P(t,X)$ has a multiple root in $\overline{k}$ by $t_1,\dots,t_r$. Given $t_0 \in k \setminus \{t_1,\dots,t_r\}$, the splitting field over $k$ of $P(t_0,X)$ inside $\overline{k}$ is denoted by $\widehat{E}_{t_0}$. Moreover, if $P(t_0,X)$ is irreducible over $k$, pick a root $x(t_0) \in \overline{k}$ of $P(t_0,X)$ and denote the field $k(a(t_0,x(t_0))$ by $E_{t_0}$.

\begin{proposition} \label{prop 1}
Assume that $k$ is Hilbertian and $G$ is isomorphic to the automorphism group of $E/k(T)$. Then, there exist infinitely many $t_0 \in k$ such that the automorphism group of the (separable) extension $E_{t_0}/k$ is isomorphic to $G$. Moreover, infinitely many of these extensions $E_{t_0}/k$ may be chosen to be distinct if the following condition holds:
\begin{equation} \label{eq}
[E\overline{k}:\overline{k}(T)] \geq 2.
\end{equation}
\end{proposition}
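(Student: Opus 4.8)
The plan is to realise $G$ through the Galois correspondence upstairs and then transport it through a Hilbertian specialisation. Set $\Gamma = {\rm{Gal}}(\widehat{E}/k(T))$ and $H = {\rm{Gal}}(\widehat{E}/E)$, so that Lemma \ref{lemma 5}, applied to $k(T) \subseteq E \subseteq \widehat{E}$, turns the hypothesis $G \cong {\rm{Aut}}(E/k(T))$ into $G \cong N/H$, where $N$ is the normalizer of $H$ in $\Gamma$. Since $x(T)$ is a primitive element of the Galois extension $\widehat{E}/k(T)$, the roots of $P(T,X)$ are the single regular $\Gamma$-orbit $\{\sigma(x(T)) : \sigma \in \Gamma\}$, all lying in $\widehat{E}$; and because $E = k(T)(a(T,x(T)))$, the stabilizer of $a(T,x(T))$ in $\Gamma$ is exactly $H$.

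First I would invoke Hilbert's irreducibility theorem. As $k$ is Hilbertian, there is an infinite set of $t_0 \in k \setminus \{t_1,\dots,t_r\}$ for which $P(t_0,X)$ is irreducible and separable over $k$ and the specialisation is \emph{good}: $\widehat{E}_{t_0}/k$ is Galois, $\widehat{E}_{t_0} = k(x(t_0))$, and the reduction map identifies ${\rm{Gal}}(\widehat{E}_{t_0}/k)$ with $\Gamma$ compatibly with the labelling of the roots $x_i(t_0)$ of $P(t_0,X)$. Under this identification the stabilizer of $a(t_0,x(t_0))$ is again $H$, i.e. ${\rm{Gal}}(\widehat{E}_{t_0}/E_{t_0})$ corresponds to $H$ and hence its normalizer in ${\rm{Gal}}(\widehat{E}_{t_0}/k)$ corresponds to $N$. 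Applying Lemma \ref{lemma 5} to $k \subseteq E_{t_0} \subseteq \widehat{E}_{t_0}$ then yields ${\rm{Aut}}(E_{t_0}/k) \cong N/H \cong G$, and separability of $P(t_0,X)$ gives separability of $E_{t_0}/k$. This proves the first assertion.

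For the last assertion, let $Q(T,Y) \in k[T][Y]$ be the minimal polynomial of $a(T,x(T))$ over $k(T)$, so $\deg_Y Q = [E:k(T)]$ and, for the good $t_0$ above, $Q(t_0,Y)$ is irreducible over $k$ with $E_{t_0} = k(\alpha_{t_0})$ for a root $\alpha_{t_0}$. Suppose, for contradiction, that only finitely many distinct fields occur among these $E_{t_0}$, and let $M$ be their compositum, a finite extension of $k$. Then $Q(t_0,Y)$ has a root in $M$ for every good $t_0$, i.e. for all $t_0$ in an infinite Hilbert subset of $k$. The key step is to deduce that $Q(T,Y)$ itself has a root in $M(T)$: one factors $Q$ into irreducibles over $M(T)$ and specialises, using the Hilbertianity of $k$ to keep $t_0 \in k$ while forcing the factorisation of $Q(t_0,Y)$ over the finite extension $M$ to mirror that of $Q$ over $M(T)$; this compels one irreducible factor of $Q$ to be linear in $Y$. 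A root of $Q$ in $M(T)$ provides a $k(T)$-embedding $E \hookrightarrow M(T) \subseteq \overline{k}(T)$; since $[E\overline{k}:\overline{k}(T)]$ equals $1$ precisely when $E$ embeds $k(T)$-linearly into $\overline{k}(T)$, this contradicts \eqref{eq}. Hence infinitely many of the $E_{t_0}$ are pairwise distinct.

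I expect the main obstacle to be the key step of the third paragraph: controlling the factorisation of $Q(t_0,Y)$ over the finite extension $M$ while insisting that the specialisation value $t_0$ remain in the base field $k$. This is a descent of Hilbert sets along $M/k$, and it is exactly the sort of statement requiring the full strength of Hilbertianity rather than mere infinitude of $k$; I would isolate it as a separate preliminary lemma. The remaining ingredients — that a good specialisation preserves both $\Gamma$ and the distinguished subgroup $H$, and that a $k(T)$-embedding into $\overline{k}(T)$ forces geometric triviality of the extension — are standard, but should be recorded carefully since the whole distinctness argument rests on the equivalence with condition \eqref{eq}.
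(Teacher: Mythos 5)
Your proof of the first assertion is exactly the paper's: a good Hilbertian specialisation yielding an isomorphism ${\rm{Gal}}(\widehat{E}_{t_0}/k) \cong {\rm{Gal}}(\widehat{E}/k(T))$ that matches ${\rm{Gal}}(\widehat{E}_{t_0}/E_{t_0})$ with ${\rm{Gal}}(\widehat{E}/E)$ (the paper cites \cite[Lemma 16.1.1]{FJ08} for this), followed by two applications of Lemma \ref{lemma 5} to identify both automorphism groups with $N/H$. For the distinctness claim the paper offers no details (``quite standard\dots left to the interested reader''), and your compositum-plus-descent argument is the standard one and is correct: the key step you isolate is precisely the fact that a separable Hilbert subset of the finite separable extension $M/k$ contains a separable Hilbert subset of $k$ (\cite[Corollary 12.2.3]{FJ08}), which, intersected with the Hilbert set of good $t_0$, forces a linear factor of $Q$ over $M(T)$ and hence a $k(T)$-embedding $E\hookrightarrow \overline{k}(T)$, contradicting \eqref{eq}.
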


\begin{proof}
As $k$ is Hilbertian, there exist infinitely many $t_0 \in k$ such that $P(t_0,X)$ is irreducible over $k$. In particular, for such a $t_0$ (up to finitely many), there exists an isomorphism
$$\psi_{t_0}:{\rm{Gal}}(\widehat{E}_{t_0}/k) \rightarrow {\rm{Gal}}(\widehat{E}/k(T))$$
that satisfies $\psi_{t_0}({\rm{Gal}}(\widehat{E}_{t_0} / {E}_{t_0}))={\rm{Gal}}(\widehat{E}/E)$ \cite[Lemma 16.1.1]{FJ08} \cite[\S1.9]{Deb09}. Denote the normalizer of ${\rm{Gal}}(\widehat{E}/E)$ in ${\rm{Gal}}(\widehat{E}/k(T))$ by $N_T$ and the one of ${\rm{Gal}}( \widehat{E}_{t_0}/E_{t_0})$ in ${\rm{Gal}}(\widehat{E}_{t_0}/k)$ by $N_{t_0}$. Then, $\psi_{t_0}$ induces an isomorphism between $N_T/{\rm{Gal}}(\widehat{E}/E)$ and $N_{t_0} / {\rm{Gal}}(\widehat{E}_{t_0} / E_{t_0})$. Moreover, by Lemma \ref{lemma 5}, one has $N_T/{\rm{Gal}}(\widehat{E}/E) \cong {\rm{Aut}}(E/k(T))$ and $N_{t_0}/{\rm{Gal}}(\widehat{E}_{t_0} / {E}_{t_0}) \cong {\rm{Aut}}(E_{t_0}/k).$ Hence, ${\rm{Aut}}(E_{t_0}/k) \cong G$. The more precise conclusion under condition \eqref{eq} is quite standard and details are then left to the interested reader.
\end{proof}

\subsection{A function field theoretic result}

Proposition \ref{lemma 4} below is a key tool in the proof of Theorem \ref{thm}, which is given in \S3.

\begin{proposition} \label{lemma 4}
Given an intermediate field $k \subseteq K \subseteq \overline{k}$ and $y \in K$, set $$P_y(T,X):=X^3 + (T-y) X + (T-y) \in K[T][X].$$ 

\noindent
{\rm{(1)}} The polynomial $P_y(T,X)$ is irreducible over $K(T)$ and separable. Moreover, it has Galois group isomorphic to $S_3$ over $K(T)$.

\vspace{0.5mm}

\noindent
{\rm{(2)}} Denote by $K_y$ the field generated over $K(T)$ by any given root of $P_y(T,X)$ inside $\Omega$. Then, given $y_1 \not=y_2$ in $K$, one has $K_{y_1} \not= K_{y_2}$.
\end{proposition}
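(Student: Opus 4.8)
The plan is to set $S = T-y$, so that $K(T) = K(S)$ and $P_y(T,X) = X^3 + SX + S = X^3 + S(X+1)$, and then to extract everything from the way the places $S=0$ and $S=\infty$ split in the cubic extension $K_y/K(T)$. \emph{Part (1), irreducibility and separability.} I would read $P_y$ as a polynomial of degree one in $S$ over $K[X]$, namely $(X+1)\,S + X^3$; its coefficients $X+1$ and $X^3$ are coprime in $K[X]$, so this is a primitive linear polynomial in $S$ and hence irreducible in $K[X][S]=K[S][X]$. Since $P_y$ is also primitive as a polynomial in $X$ over $K[S]$ (the coefficients $1,S,S$ have content $1$), Gauss's lemma yields irreducibility over $K(S)=K(T)$. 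Separability is immediate: the $X$-derivative is $3X^2+S$ when $\mathrm{char}\,K\neq 3$ and equals $S$ when $\mathrm{char}\,K=3$, so in all cases it is a nonzero polynomial of $X$-degree $<3$ and is therefore prime to the irreducible $P_y$. Consequently the Galois group is a transitive subgroup of $S_3$, that is $A_3$ or $S_3$, and it is $A_3$ exactly when $K_y/K(T)$ is Galois.

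\emph{Part (1), the Galois group is $S_3$.} To force this uniformly in the characteristic I would avoid the discriminant criterion (which is useless when $\mathrm{char}\,K=2$) and instead show $K_y/K(T)$ is not Galois by producing a place of ramification index $2$. Let $\theta$ be the chosen root; then $N_{K_y/K(T)}(\theta)=\pm(T-y)$ is the constant term of $P_y$ and has a simple pole at the place $T=\infty$, so $\sum_{w\mid\infty} f_w\,w(\theta)=v_\infty(N(\theta))=-1$ and some place $w$ above $\infty$ has $w(\theta)<0$. Writing $e=e_w$, so that $w(S)=-e$, the three terms of $\theta^3+S\theta+S=0$ have valuations $3w(\theta)$, $-e+w(\theta)$ and $-e$; since $w(\theta)<0$ the last is strictly the largest, so the other two must coincide, giving $2w(\theta)=-e$ and hence $e=2$. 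As every ramification index in a Galois extension of degree $3$ lies in $\{1,3\}$, the extension $K_y/K(T)$ is not Galois and its Galois group is $S_3$. This is the step I expect to be the main obstacle: pinning the group down to $S_3$ rather than $A_3$ in a way that survives characteristic $2$, which the norm-plus-valuation argument over $T=\infty$ achieves uniformly.

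\emph{Part (2).} Here the distinguishing invariant is the unique totally ramified finite place. First, $T=y$ is totally ramified in $K_y/K(T)$: reducing $P_y$ modulo $T-y$ gives $X^3$, so any place $w$ above $T=y$ has $w(\theta)>0$, and then $\theta^3=-(T-y)(\theta+1)$ together with $w(\theta+1)=0$ forces $3w(\theta)=e_w$, whence $w(\theta)=1$ and $e_w=3$. Second, every other finite place ramifying in $K_y$ divides $\mathrm{disc}_X(P_y)=-(T-y)^2\bigl(4(T-y)+27\bigr)$. When $\mathrm{char}\,K\in\{2,3\}$ this discriminant is a unit times a power of $T-y$, so $T=y$ is the only ramified finite place; otherwise the sole further candidate is $T=y-27/4$, where $P_y$ reduces to a cubic with a double but not a triple root (it has no $X^2$ term yet nonzero lower terms), so the discriminant vanishes there only to first order and the ramification index is $2$, not $3$. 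Hence $T=y$ is the unique totally ramified finite place of $K_y/K(T)$, an invariant of the field $K_y$ alone; therefore an equality $K_{y_1}=K_{y_2}$ would force $y_1=y_2$, which is the contrapositive of the claim.
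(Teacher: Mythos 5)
Your proof is correct, but it follows a genuinely different route from the paper's. For part (1) the paper simply cites \cite[\S2.1]{JLY02}, whereas you give a self-contained argument: irreducibility by viewing $P_y$ as a primitive polynomial of degree one in $S=T-y$ plus Gauss's lemma, and the group $S_3$ by exhibiting a place of ramification index $2$ over $T=\infty$ via the norm-and-Newton-polygon computation --- a nice touch, since it works uniformly in characteristic $2$ where the discriminant test is unavailable. For part (2) the paper argues more elementarily: writing $T=y_1-x_1^3/(1+x_1)$, it shows a root $x_2$ of $P_{y_2}$ satisfies the cubic $F(X)=(x_1+1)X^3-((x_1+1)\delta+x_1^3)(X+1)$ over the rational function field $K(x_1)=K(T)(x_1)$, and that $F$ is Eisenstein at a simple root $\mu$ of $(Y+1)\delta+Y^3$, hence irreducible, so $x_2\notin K_{y_1}$; no places, norms or discriminants of extensions are invoked. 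Your version instead isolates the invariant ``unique totally ramified finite place'' ($T=y$), which requires the discriminant $-(T-y)^2(4(T-y)+27)$, a case split on the characteristic, and (at $S=-27/4$, char $\neq 2,3$) the tame different bound to rule out $e=3$ --- that last inference is the one spot you should spell out: the exclusion of total ramification there follows from the \emph{computed} first-order vanishing of the discriminant together with tameness, not merely from the reduction having a double rather than triple root. The two arguments are close cousins (the paper's Eisenstein prime $x_1-\mu$ is precisely a totally ramified point in disguise), but yours buys a conceptual, characteristic-uniform treatment of both parts at the cost of more valuation-theoretic machinery, while the paper's is shorter and entirely elementary for part (2) at the cost of outsourcing part (1).
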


\begin{proof}
For (1), we refer, e.g., to \cite[\S2.1]{JLY02}. Now, we prove (2). Let $x_1$ (resp., $x_2$) be a root in $\Omega$ of $P_{y_1}(T,X)$ (resp., of $P_{y_2}(T,X)$). We show below that the fields $K(T)(x_1)$ and $K(T)(x_2)$ are distinct. Using that $$(1+x_i)y_i = T(1+x_i)+{x_i^3}$$ for each $i \in \{1,2\}$ and setting $$\delta:=y_2-y_1,$$ we see that $x_2$ is a root of the polynomial
$$F(X):=(x_1+1)X^3 - ((x_1 + 1)\delta + x_1^3)(X+1) \in K(x_1)[X].$$
By (1), $x_1$ is transcendental over $K$. Then, consider the following polynomial:
$$G(Y):= (Y + 1)\delta + Y^3 \in K[Y].$$
Since $\delta \not=0$, the polynomial $G(Y)$ has at least one root in $\overline{k}$, say $\mu$, of multiplicity 1. Then, $F(X)$ is an Eisenstein polynomial with respect to the prime element $x_1-\mu$ of $K[x_1]$, hence irreducible over $K(x_1)$. In particular, $x_2$ is not in $K(x_1)$. But one has $K(x_1)=K(T)(x_1)$ since $$T = y_1 - \frac{x_1^3}{1+x_1}.$$
Hence, (2) holds.
\end{proof}

\section{Proof of Theorem \ref{thm}}

By Proposition \ref{prop 1}, it suffices to prove Proposition \ref{prop 2} below to get the conclusion of Theorem \ref{thm}.

\begin{proposition} \label{prop 2}
Assume that $k$ is Hilbertian. Then, there exists a finite separable extension of $k(T)$ (contained in $\Omega$) whose automorphism group is isomorphic to $G$ and which satisfies condition \eqref{eq} of Proposition \ref{prop 1}.
\end{proposition}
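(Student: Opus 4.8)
The plan is to produce $G$ as an automorphism group by cutting it out of a Galois group via Lemma~\ref{lemma 5}. Concretely, I would search for a finite group $\Gamma$ that is realizable as the Galois group of a \emph{regular} Galois extension $\widehat E/k(T)$ (with $\widehat E \subseteq \Omega$), together with a subgroup $H \le \Gamma$ such that $N/H \cong G$, where $N$ denotes the normalizer of $H$ in $\Gamma$. Setting $E := \widehat E^{\,H}$, an application of Lemma~\ref{lemma 5} to the Galois extension $\widehat E/k(T)$ and the intermediate field $E$ yields ${\rm{Aut}}(E/k(T)) \cong N/H \cong G$ immediately. Moreover, since $\widehat E/k(T)$ is regular, so is $E/k(T)$, whence $[E\overline k : \overline k(T)] = [\Gamma : H]$; condition \eqref{eq} thus reduces to the harmless inequality $H \subsetneq \Gamma$. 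In this way the statement splits into two essentially independent problems: a group-theoretic one (exhibit such $\Gamma$ and $H$) and a realization one (realize $\Gamma$ regularly over $k(T)$).

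For the group-theoretic step I would take $\Gamma = S_n$ for a suitable $n$ and use the classical fact that every finite group occurs as a normalizer quotient $N/H$ inside a symmetric group. The one point that deserves care is that the most naive attempt fails: embedding $G$ into $S_{|G|}$ by its regular representation and letting $H$ be the image produces $N/H \cong {\rm{Aut}}(G)$, not $G$, so one must invoke the finer normalizer-quotient construction to recover $G$ itself. This step is pure finite group theory and does not involve $k$.

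The realization step is where Proposition~\ref{lemma 4} is the key tool. The delicate feature is that $k$ may have positive characteristic, so one needs a realization of $S_n$ over $k(T)$ that is insensitive to the characteristic; Proposition~\ref{lemma 4} provides exactly this in the basic case $n = 3$, since the trinomial $P_y$ yields a regular $S_3$-extension $K_y/K(T)$ over any intermediate field $K$ and in any characteristic. I would realize the ambient $\Gamma = S_n$ regularly over $k(T)$ by the analogous characteristic-free construction (the case $n = 3$ being Proposition~\ref{lemma 4} itself), and I would use part~(2) of that proposition---that distinct parameters $y$ give distinct fields $K_y$---to keep the constituent extensions genuinely distinct and independent, both to secure the intended Galois group and to guarantee that the extension is geometrically nontrivial, i.e.\ that \eqref{eq} holds.

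I expect the main obstacle to lie precisely in this characteristic-free regular realization of the ambient symmetric group (and in the verification that regularity, hence \eqref{eq}, is inherited by the fixed field $E = \widehat E^{\,H}$), rather than in the group theory, which is classical. Once $\widehat E$ and $H$ are in place the identification ${\rm{Aut}}(E/k(T)) \cong G$ is a one-line consequence of Lemma~\ref{lemma 5}, and feeding $E$ into Proposition~\ref{prop 1} then delivers the infinitely many distinct finite extensions of $k$ demanded by Theorem~\ref{thm}.
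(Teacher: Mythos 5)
Your overall strategy --- reduce to finding $H \le \Gamma$ with $N_\Gamma(H)/H \cong G$ for a group $\Gamma$ admitting a regular realization over $k(T)$, pass to the fixed field $E = \widehat E^{\,H}$, and apply Lemma \ref{lemma 5} --- is a legitimate reduction, and your bookkeeping around it is correct: regularity of $\widehat E/k(T)$ does give $[E\overline k:\overline k(T)] = [\Gamma:H]$, so \eqref{eq} amounts to $H \subsetneq \Gamma$. The genuine gap lies in the step you set aside as ``pure finite group theory.'' The assertion that every finite group occurs as a normalizer quotient $N_{S_n}(H)/H$ inside a symmetric group is not a classical fact, and you do not supply the ``finer normalizer-quotient construction'' you appeal to. As you yourself note, the regular representation yields ${\rm Aut}(G)$ rather than $G$; the natural variants (stabilizers, block systems, wreath-product subgroups of $S_n$) also give the wrong quotient, and determining which groups arise as $N_{S_n}(H)/H$ is a genuinely hard problem. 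This is precisely why neither \cite{Fri80}, nor \cite{Tak80}, nor the present paper takes this route: if such a fact were available, the whole theorem would reduce to realizing $S_n$ over $k$, which is immediate from Hilbertianity. Retreating to an arbitrary $\Gamma$ does not help either: with $\Gamma = G$ and $H = 1$ the group theory is trivial, but the realization step becomes the regular inverse Galois problem. A secondary, more repairable, issue: you invoke ``the analogous characteristic-free construction'' of a one-parameter regular $S_n$-realization for general $n$, but Proposition \ref{lemma 4} treats only $n=3$ and no such analogue appears in the paper; regular $S_n$-realizations over arbitrary fields do exist, but that needs its own argument.

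The paper avoids all of this by a different construction. It realizes $S_n$ not regularly over $k(T)$ but as a constant extension $L/k$ (this is where Hilbertianity enters), writes $L^{G'} = k(y)$ for $G' \cong G$, and adjoins a single root $x(T)$ of the cubic $P_y(T,X)$ from Proposition \ref{lemma 4} to form $E = L(T,x(T))$. The automorphism group is then computed directly, without naming the Galois closure or using Lemma \ref{lemma 5}: part (1) of Proposition \ref{lemma 4} forces every automorphism over $L^{G'}(T)$ to fix $x(T)$ (otherwise $E$ would contain the full degree-$6$ splitting field over $L(T)$), and part (2) --- distinct parameters $y$ give distinct root fields --- forces every automorphism over $k(T)$ to fix $y$. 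Implicitly this exhibits $G$ as a normalizer quotient inside a wreath-product-type group built from $S_3$ and $S_n$, not inside a symmetric group, and the extension $E/k(T)$ is far from regular: only its degree-$3$ geometric part matters for \eqref{eq}, since $[E\overline k:\overline k(T)] = 3$. So your reading of Proposition \ref{lemma 4} as a seed for regularly realizing the ambient group inverts its actual role, which is to rigidify a constant $S_n$-extension.
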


\begin{proof}

Let $n \geq 1$ be an integer such that $G$ is isomorphic to a subgroup of $S_n$. We may and will assume that $G$ itself is a subgroup of $S_n$.

First, let $L/k$ be a finite Galois extension with $L \subseteq \overline{k}$ and whose Galois group is isomorphic to $S_n$; such an extension exists as $k$ has been assumed to be Hilbertian\footnote{Indeed, given algebraically independent indeterminates $T_1,\dots,T_n$ over $k$, recall that $S_n$ acts on $k(T_1,\dots,T_n)$ by permuting the variables. The fixed field is the field of symmetric functions in these variables. By the fundamental theorem of symmetric functions, this field is generated over $k$ by the elementary symmetric functions $U_1,\dots,U_n$, which are algebraically independent over $k$. Thus, the fixed field $k(U_1,\dots,U_n)$ is a rational function field. By the Hilbertianity of $k$, $S_n$ can be realized over $k$. See, e.g., \cite[Chapter VI, \S2, Example 4]{Lan02} for more details.}. Fix an isomorphism $\phi : {\rm{Gal}}(L/k) \rightarrow S_n$ and set $G':=\phi^{-1}(G).$ As the finite extension $L/k$ is separable, the same holds for the subextension $L^{G'}/k$, where $L^{G'}$ is the fixed field of $G'$ in $L$. Let $y \in L^{G'}$ be such that $L^{G'} = k(y).$ By Proposition \ref{lemma 4}, $$P_y(T,X)=X^3 + (T-y) X + (T-y) \in L^{G'}[T][X]$$ is irreducible over $L^{G'}(T)$. Let $x(T) \in \Omega$ be a root and $E$ the compositum of $L(T)$ and $L^{G'}(T, x(T))$ inside $\Omega$. It is clear that $E=L(T,x(T)).$ 

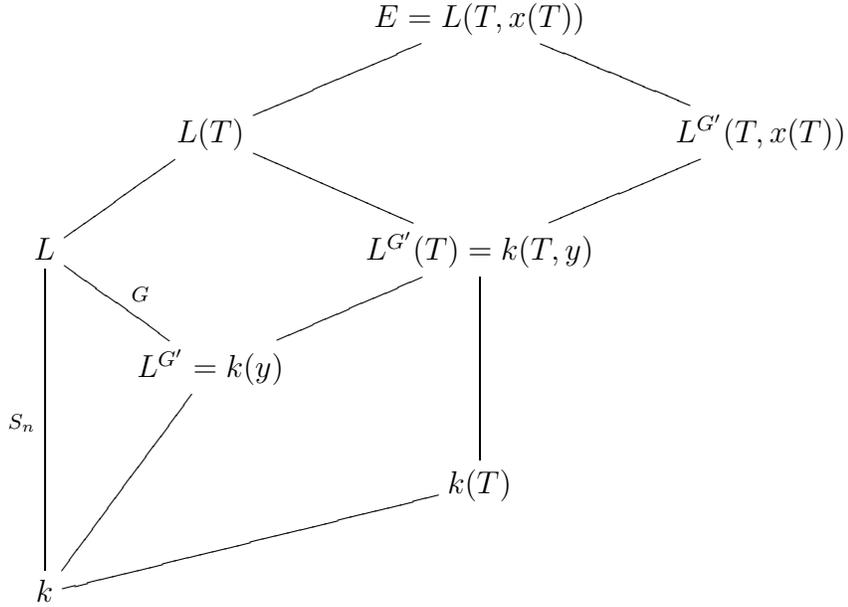
\begin{figure}[h!]
\[ \xymatrix{
& & E=L(T,x(T)) \ar@{-}[rd] &  \\
& L(T) \ar@{-}[ur] & & L^{G'}(T,x(T)) \\
L \ar@{-}[ur] & & L^{G'}(T)=k(T,y) \ar@{-}[lu] \ar@{-}[ru]&& \\
& L^{G'}=k(y) \ar@{-}[ru] \ar@{-}[lu]_{G} &    &\\
& & k(T) \ar@{-}[uu] & \\
k \ar@{-}[rur] \ar@{-}[uuu]^{S_n} \ar@{-}[ruu]& & & \\
}
\]
\caption{Field extensions}\label{Fig1a}
\end{figure}

Now, we compute the automorphism group of $E/L^{G'}(T)$:

\begin{lemma} \label{lemma 1}
One has ${\rm{Aut}}(E/L^{G'}(T)) \cong G$.
\end{lemma}

\begin{proof}
By Proposition \ref{lemma 4}, $P_y(T,X)$ is irreducible over $L(T)$, that is, the fields $L(T)$ and $L^{G'}(T,x(T))$ are linearly disjoint over $L^{G'}(T)$. As the finite extension $L(T)/L^{G'}(T)$ is Galois and has Galois group isomorphic to $G$, the same is true for $E/L^{G'}(T,x(T)).$ It then suffices to show that any given automorphism $\sigma$ of $E/L^{G'}(T)$ fixes $x(T)$. Assume that $\sigma$ does not. Then, $\sigma(x(T))$ is another root of $P_y(T,X)$ and it is in $E$. Hence, $E$ contains all the roots of $P_y(T,X)$ (as $P_y(T,X)$ has degree 3 in $X$). By Proposition \ref{lemma 4}, we get $[E:L(T)] \geq 6$, a contradiction.
\end{proof}

Next, we determine the automorphism group of $E/k(T)$:

\begin{lemma} \label{lemma 2}
One has ${\rm{Aut}}(E/k(T)) \cong G$.
\end{lemma}

\begin{proof}
By Lemma \ref{lemma 1}, it suffices to prove that $${\rm{Aut}}(E/L^{G'}(T))={\rm{Aut}}(E/k(T)).$$ Clearly, the former is a subgroup of the latter. For the converse, let $\sigma$ be an element of ${\rm{Aut}}(E/k(T))$. Suppose that $\sigma$ is not in ${\rm{Aut}}(E/L^{G'}(T))$. Then, one has $\sigma(y) \not=y$ and $\sigma(x(T))$ is a root of 
$$P_{\sigma(y)}(T,X)=X^3 + (T-\sigma(y))X + (T-\sigma(y)))  \in L[T][X].$$
Proposition \ref{lemma 4} then gives $L(T, \sigma(x(T))) \not=L(T,x(T))(=E)$. As $\sigma$ is in ${\rm{Aut}}(E/k(T))$, we get that the left-hand side field is strictly contained in $E$, which cannot happen as both fields have degree 3 over $L(T)$.
\end{proof}

Finally, it remains to notice that $[E\overline{k}:\overline{k}(T)]=3$ (Proposition \ref{lemma 4}) as needed for condition \eqref{eq} of Proposition \ref{prop 1}. Hence, the conclusion of Proposition \ref{prop 2} holds.
\end{proof}

\bibliography{Biblio2}
\bibliographystyle{alpha}

\end{document}